\documentclass[12pt]{article}
\usepackage[UKenglish]{babel}
\usepackage[T1]{fontenc}
\usepackage{amsmath,amsthm,amsfonts,amssymb,graphicx,float,microtype,thmtools,underscore,mathtools,thm-restate}
\usepackage[shortlabels]{enumitem}
\setlist[itemize]{topsep=0ex,itemsep=0ex,parsep=0ex}
\setlist[enumerate]{topsep=0ex,itemsep=0ex,parsep=0ex}
\usepackage[usenames,dvipsnames,svgnames,table]{xcolor}
\usepackage[unicode=true]{hyperref}
\usepackage{xurl}
\hypersetup{ 
colorlinks,
linkcolor={blue!60!black},
citecolor={black},
urlcolor={blue!60!black},
breaklinks=true,
pdftitle={Triangle-Free Graphs with Diameter 2}}
\usepackage[capitalise, compress, nameinlink, noabbrev]{cleveref}
\crefname{lem}{Lemma}{Lemmas}
\crefname{thm}{Theorem}{Theorems}
\crefname{prop}{Proposition}{Propositions}
\crefname{ques}{Question}{Questions}
\crefname{open}{Open Problem}{Open Problems}
\crefname{cor}{Corollary}{Corollaries}
\crefname{conj}{Conjecture}{Conjectures}
\crefname{enumi}{Item}{Items}
\crefformat{equation}{(#2#1#3)}
\Crefformat{equation}{Equation #2(#1)#3}
\crefformat{enumi}{#2#1#3}
\Crefformat{enumi}{Item (#2#1#3)}
\newcommand{\defn}[1]{\textcolor{Maroon}{\emph{#1}}}
\usepackage[longnamesfirst,numbers,sort&compress]{natbib}
\makeatletter
\def\NAT@spacechar{~}
\makeatother
\setlength{\bibsep}{0.4ex plus 0.2ex minus 0.2ex}
\usepackage[margin=30mm]{geometry}
\renewcommand{\baselinestretch}{1.1}
\setlength{\footnotesep}{\baselinestretch\footnotesep}
\setlength{\parindent}{0cm}
\setlength{\parskip}{1.2ex}
\allowdisplaybreaks

\renewcommand{\epsilon}{\varepsilon}
\renewcommand{\emptyset}{\varnothing}
\renewcommand{\geq}{\geqslant}
\renewcommand{\leq}{\leqslant}
\renewcommand{\ge}{\geqslant}
\renewcommand{\le}{\leqslant}
\newcommand{\PP}{\mathcal{P}}
\DeclareMathOperator{\dist}{dist}
\DeclareMathOperator{\srg}{srg}
\newcommand{\eps}{\varepsilon}

\newcommand{\ZZ}{\mathbb{Z}}
\renewcommand{\thefootnote}{\fnsymbol{footnote}}
\newcommand{\nicebreak}{\vskip 0pt plus 50pt\penalty-300\vskip 0pt plus -50pt }
\theoremstyle{plain}
\newtheorem{thm}{Theorem}
\newtheorem{lem}[thm]{Lemma}
\newtheorem{cor}[thm]{Corollary}
\newtheorem{open}[thm]{Open Problem}

\newtheorem{prop}[thm]{Proposition}

\crefname{obs}{Observation}{Observations}
\newtheorem*{lem*}{Lemma}
\theoremstyle{definition}
\newtheorem{conj}[thm]{Conjecture}
\newtheorem*{conj*}{Conjecture}

\begin{document}
\title{\bf\boldmath\fontsize{18pt}{18pt}\selectfont Triangle-Free Graphs with Diameter 2}

\author{%
Alice Devillers\,\footnotemark[7] \qquad
Nina Kamčev\,\footnotemark[6] \qquad
Brendan McKay\,\footnotemark[3] \\
Padraig Ó Catháin\,\footnotemark[4] \qquad
Gordon Royle\,\footnotemark[7] \qquad
Geertrui Van de Voorde\,\footnotemark[5] \\
Ian Wanless\,\footnotemark[2] \qquad
David~R.~Wood\,\footnotemark[2]
}

\footnotetext[6]{Department of Mathematics, Faculty of Science, University of Zagreb, Croatia. Supported by the Croatian Science Foundation under the project number HRZZ-IP-2022-10-5116 (FANAP).}

\footnotetext[3]{School of Computing, Australian National University, 
Canberra, Australia (\texttt{Brendan.McKay@anu.edu.au}).}
 
\footnotetext[4]{Fiontar agus Scoil na Gaeilge, Dublin City University, Dublin, Ireland (\texttt{padraig.ocathain@dcu.ie}).}

\footnotetext[7]{Department of Mathematics and Statistics, University of Western Australia, Perth, Australia (\texttt{\{alice.devillers,gordon.royle\}@uwa.edu.au}). AD was supported by Australian Research Council Discovery Project DP200100080.}

\footnotetext[5]{School of Mathematics and Statistics, University of Canterbury, Christchurch, New Zealand (\texttt{geertrui.vandevoorde@canterbury.ac.nz}).}

\footnotetext[2]{School of Mathematics, Monash University, Melbourne, Australia (\texttt{\{ian.wanless,david.wood\}@monash.edu}). Research supported by the Australian Research Council.}

\maketitle

\begin{abstract}
There are finitely many graphs with diameter $2$ and girth 5. What if the girth 5 assumption is relaxed? 
Apart from stars, are there finitely many triangle-free graphs with diameter $2$ and no $K_{2,3}$ subgraph? This question is related to the existence of triangle-free strongly regular graphs, but allowing for a range of co-degrees gives the question a more extremal flavour. More generally, for fixed $s$ and $t$, are there infinitely many twin-free triangle-free $K_{s,t}$-free graphs with diameter 2? This paper presents partial results regarding these questions, including computational results, potential Cayley-graph and probabilistic constructions. 
\end{abstract}

\newpage

\renewcommand{\thefootnote}{\arabic{footnote}}

\section{Introduction}

The class of triangle-free graphs with diameter $2$ is surprisingly rich. For example, the Kneser\footnote{The Kneser graph $K(n,k)$ has vertex-set all $k$-sets of a ground set of size $n$, where $AB$ is an edge if and only if $A\cap B=\emptyset$.} graph $K(3k-1,k)$ is triangle-free with diameter $2$, which \citet{Lovasz78} famously proved has chromatic number $k+1$. However, if we impose some other properties on the class, then the diversity of examples quickly drops. For example, consider such graphs with girth 5. In one of the most striking achievements of algebraic graph theory, \citet{HS60} proved that every graph with diameter $2$ and girth 5 is $k$-regular with $k^2+1$ vertices, where $k\in\{2,3,7,57\}$ (see \citep{GR01}). The only 2-regular example is the 5-cycle, the only 3-regular example is the Petersen graph on 10 vertices, the only 7-regular example is the Hoffman-Singleton graph on 50 vertices, and it is open whether there exists a 57-regular graph with diameter $2$ and girth 5, which would have 3250 vertices. Put another way (since every tree with diameter $2$ is a star and $K_{2,2}\cong C_4$), the only triangle-free $K_{2,2}$-free graph with diameter $2$ and with $n\geqslant 3251$ vertices is the star graph $K_{1,n-1}$. The following conjecture replaces $K_{2,2}$ in this result by $K_{2,t}$:

\begin{conj}
\label{K2tFree}
For  every integer $t\geqslant 2$, there exists $n_0$ such that if $G$ is a triangle-free $K_{2,t}$-free graph $G$ with diameter $2$ and with $n\geqslant n_0$ vertices, then $G$ is the star graph $K_{1,n-1}$. 
\end{conj}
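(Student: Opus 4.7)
The plan is to assume $G$ is triangle-free, $K_{2,t}$-free, of diameter $2$, and not the star $K_{1,n-1}$, then bound $n$ in terms of $t$. The basic tool is a degree-comparison lemma: for any non-adjacent pair $u,v$, $\deg(u)\geq \deg(v)/(t-1)$. Indeed, each $a\in N(v)\setminus N(u)$ needs a common neighbour with $u$ by diameter $2$; since $G$ is triangle-free and $a\sim v$, we have $N(a)\cap N(v)=\emptyset$, so the common neighbour lies in $N(u)\setminus N(v)$; by $K_{2,t}$-freeness each $w\in N(u)\setminus N(v)$ is adjacent to at most $t-1$ vertices of $N(v)$, and a double count finishes the argument.

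Let $v$ be a vertex of maximum degree $\Delta$ and write $A=N(v)$, $B=V\setminus N[v]$; since $G$ is not a star, $|B|\geq 1$, so $\deg(u)\geq \Delta/(t-1)$ for every $u\in B$. Combining this with the path-count bound $\sum_w\binom{\deg(w)}{2}\leq (t-1)\binom{n}{2}$ (each non-edge has at most $t-1$ common neighbours and each edge has none by triangle-freeness), and setting $\alpha=\Delta/n$, one obtains
\[
\alpha^{2}(1-\alpha)\;\leq\;\frac{C_t}{n}
\]
for a constant $C_t=O(t^3)$. Hence for all large $n$ either $\alpha\leq O_t(n^{-1/2})$ (the low-degree regime $\Delta=O_t(\sqrt n)$) or $\alpha\geq 1-O_t(n^{-1})$ (so $|B|=O_t(1)$).

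The almost-universal branch is ruled out directly: if $|B|\leq c=O_t(1)$, any $u\in B$ has $|N(u)\cap A|\leq t-1$ by $K_{2,t}$-freeness and $|N(u)\cap B|\leq c-1$, hence $\deg(u)\leq c+t-2=O_t(1)$; yet $\deg(u)\geq \Delta/(t-1)=\Omega(n/t)$, a contradiction once $n$ exceeds an explicit function of $t$. This reduces the problem to the low-degree regime, in which all degrees are $\Theta_t(\sqrt n)$ and the graph is ``nearly regular'' in the sense that degrees of non-adjacent vertices lie within a factor $t-1$ of one another.

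The main obstacle is this low-degree regime, which is the genuine content of the conjecture and generalises the Hoffman--Singleton setting. For $t=2$ the conditions force the exact equation $A(G)^{2}+A(G)+I=J$, from which regularity and integer eigenvalues pin $\Delta\in\{2,3,7,57\}$; for $t\geq 3$ this is replaced by $A(G)^{2}=D-M$, where $M$ is supported on non-edges and takes values in $\{1,\dots,t-1\}$, so the spectrum is no longer rigidly determined. My strategy would be to (i) sharpen the degree-comparison lemma to near-regularity on the independent set $N(v)$, (ii) apply an equitable-partition/interlacing argument to the quotient matrix induced by the (at most $t-1$) co-degree classes, and (iii) seek an integrality or positive-semidefiniteness obstruction as $n\to\infty$. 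A successful attack seems to require either a new algebraic identity playing the role of $A^{2}+A+I=J$, or a sharp stability theorem for near-extremal $K_{2,t}$-free configurations of K\H{o}v\'ari--S\'os--Tur\'an type.
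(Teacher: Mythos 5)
The statement you are addressing is \cref{K2tFree}, which is a \emph{conjecture}: the paper does not prove it, and only the case $t=2$ is known (via the Hoffman--Singleton theorem). Your proposal does not constitute a proof either, though its preliminary reductions are sound and essentially reproduce the paper's partial results. Your degree-comparison lemma for non-adjacent pairs is the distance-2 step inside the proof of \cref{DegreeBounds}; your path-count inequality $\sum_w\binom{\deg(w)}{2}\leq(t-1)\binom{n}{2}$ is the first half of \cref{DegreeSequences}; and the conclusion that in the surviving regime all degrees are $\Theta_t(\sqrt n)$ is the paper's subsequent proposition. The dichotomy via $\alpha^2(1-\alpha)\leq C_t/n$ and the elimination of the almost-universal branch are correct (that branch can be dispatched even more directly: any $u\notin N[v]$ has at most $t-1$ neighbours in $N(v)$ by $K_{2,t}$-freeness, so its degree is bounded by $|B|+t-2$ regardless).

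The genuine gap is everything after that. Once every degree lies in $[c_t\sqrt n,\,C_t\sqrt n]$, you are facing exactly the open problem: steps (i)--(iii) of your strategy are a research programme, not an argument, and you give no reason why an interlacing or integrality obstruction must materialise when the co-degree function on non-edges takes values in $\{1,\dots,t-1\}$ rather than being a constant $\mu$. To calibrate the difficulty: as noted in \cref{StronglyRegularGraphs}, the case $t=7$ of \cref{K2tFree} would already imply that there are only finitely many non-trivial triangle-free strongly regular graphs, a long-standing open problem in algebraic graph theory; and even for $t=3$ the paper's computer search produced nearly six million examples (up to 120 vertices) without settling the question. So the ``low-degree regime'' you defer is not a technical remainder but the entire content of the conjecture, and your write-up should be framed as a reduction plus a proposed attack rather than as a proof.
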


This conjecture was posed by the final author at the ``Extremal Problems in Graphs, Designs, and Geometries'' workshop at the Mathematical Research Institute MATRIX. This paper reports on work related to \cref{K2tFree}  completed during and after workshop. \cref{Degrees} presents results about the degrees of vertices in $n$-vertex graphs that arise in \cref{K2tFree}. In particular, for fixed $t$, we show that the minimum degree, average degree and maximum degree are all within a constant factor of $\sqrt{n}$. The next two sections focus on the $t=3$ case of \cref{K2tFree}, which is the first open case. We use computer search to find examples of $K_{2,3}$-free graphs with diameter 2. In all, we have found 
5,936,056 examples, but nothing suggestive of an infinite class. \cref{StronglyRegularGraphs} explores relationships between the $t=3$ case of \cref{K2tFree} and questions about strongly regular graphs. \cref{CayleyGraphs} investigates potential Cayley graphs satisfying \cref{K2tFree}, and shows that groups of order $2^m$ satisfy the conjecture. 
\cref{Generalisation} considers a possible generalisation of \cref{K2tFree} replacing $K_{2,t}$ by $K_{s,t}$. Finally, \cref{RandomGraphs} discusses whether random graphs provide counterexamples to \cref{K2tFree} (and the generalisation presented in \cref{Generalisation}). The conjectures turn out to be interesting from this perspective as well. 

To conclude this introduction, note that results of \citet{Plesniak75} and \citet{CDMSS22} respectively imply \cref{K2tFree} for planar graphs  and projective planar graphs, but this says nothing about the general question. 

\section{Degree Considerations}
\label{Degrees}

As discussed above, the $t=2$ case of \cref{K2tFree} holds. The proof starts by showing that $G$ is regular. The following observation generalises this property. Let $\delta(G)$ and $\Delta(G)$ be the minimum and maximum degree of $G$ respectively.

\begin{thm}
\label{DegreeBounds}
For every integer $t\geqslant 2$, if $G$ is a triangle-free $K_{2,t}$-free graph with diameter $2$ and $G$ is not a star graph, then 
$$\Delta(G)\leq (t-1)( (t-1)\delta(G)-t+2)-t+2.$$
\end{thm}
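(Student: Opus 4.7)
The plan is to establish the bound via a key inequality: for any two non-adjacent distinct vertices $x,y$ in $G$, $d(x) \leq (t-1)d(y) - (t-2)$. Granting this, I pick a max-degree vertex $v$ and a min-degree vertex $u$, and look for an intermediate vertex $y$ non-adjacent to both (and distinct from them). Applying the inequality to $(v,y)$ and then to $(y,u)$ and composing gives $\Delta \leq (t-1)[(t-1)\delta - (t-2)] - (t-2)$, which is exactly the stated bound.

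For the inequality, I set $C = N(x) \cap N(y)$; diameter~$2$ gives $|C| \geq 1$ and $K_{2,t}$-freeness gives $|C| \leq t-1$. I then double count edges between $N(x) \setminus C$ and $N(y) \setminus C$. Each $z \in N(x)\setminus C$ satisfies $z \not\sim y$, hence has a common neighbor with $y$; triangle-freeness (together with $z \sim x$) forbids this common neighbor from lying in $C$, since every vertex of $C$ is adjacent to $x$. So $z$ has a neighbor in $N(y)\setminus C$. Conversely, each $c \in N(y)\setminus C$ satisfies $c \not\sim x$, so by $K_{2,t}$-freeness applied to $(x,c)$ we have $|N(c) \cap N(x)| \leq t-1$, bounding $c$'s neighbours in $N(x)\setminus C$. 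Comparing the two counts yields $d(x) - |C| \leq (t-1)(d(y) - |C|)$, and since $t \geq 2$ this is weakest at $|C|=1$, giving the claimed inequality.

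The remaining case is when no such intermediate $y$ exists, i.e.\ $V = N[v] \cup N[u]$. By triangle-freeness, $|N[v] \cap N[u]| = 2$ when $u \sim v$ and equals $|N(v) \cap N(u)| \leq t-1$ otherwise, so in either case $|B| \leq \delta$ where $B = V \setminus N[v]$. Double-counting edges between $N(v)$ and $B$ now suffices: each $a \in N(v)$ has $d(a) - 1 \geq \delta - 1$ neighbours in $B$ (triangle-freeness with $v$) and each $b \in B$ has at most $t-1$ neighbours in $N(v)$ ($K_{2,t}$-freeness with $v$), giving $\Delta(\delta-1) \leq (t-1)\delta$. Hence $\Delta \leq (t-1)\delta/(\delta-1) \leq 2(t-1)$, using $\delta \geq 2$ (forced by diameter~$2$ plus non-star), and one checks that $2(t-1) \leq (t-1)^2(\delta-1) + 1$ reduces to $(t-2)^2 \geq 0$.

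The principal hurdle is the triangle-freeness step in the lemma: showing that the common neighbour of $z$ and $y$ must lie outside $C$ is exactly what makes the bound interact cleanly with the $K_{2,t}$ constraint, and without it the double counting collapses to a trivial inequality. A secondary concern is that the edge-count in the residual case must genuinely beat the target bound across all $t, \delta \geq 2$, which ultimately rests on the trivial $(t-2)^2 \geq 0$.
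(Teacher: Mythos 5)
Your proof is correct, and its engine---the inequality $d(x)\le (t-1)d(y)-(t-2)$ for distinct non-adjacent $x,y$, obtained by counting edges between $N(x)\setminus C$ and $N(y)\setminus C$ with $C=N(x)\cap N(y)$---is exactly the paper's key step, applied twice in composition just as the paper does. You diverge only in the residual case. The paper proves the chained bound for \emph{every} pair $v,w$: when $vw$ is an edge with no vertex non-adjacent to both, it deduces that $G$ is bipartite, hence complete bipartite, hence $\Delta(G)\le t-1$ (and it treats the bipartite case separately up front). You instead fix one maximum-degree vertex $v$ and one minimum-degree vertex $u$, and when $V=N[v]\cup N[u]$ you run a second, different double count (edges between $N(v)$ and $V\setminus N[v]$, using $|V\setminus N[v]|\le\delta$) to get $\Delta\le 2(t-1)$, then close with $2(t-1)\le (t-1)^2(\delta-1)+1$, i.e.\ $(t-2)^2\ge 0$. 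Both closings are sound: yours avoids the structural detour through bipartiteness at the cost of an extra numerical verification, while the paper's yields the sharper conclusion $\Delta\le t-1$ in its degenerate case and establishes the degree comparison uniformly for all vertex pairs rather than just the extremal ones.
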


\begin{proof}
Star graphs are the only triangle-free graphs $G$ with diameter $2$ and $\delta(G)=1$. So we may assume that $\delta(G)\geq 2$. 

First suppose that $G$ is bipartite with bipartition $\{A,B\}$. If there exists non-adjacent vertices $v\in A$ and $w\in B$, then $\dist_G(v,w)\geqslant 3$. So $G$ is a complete bipartite graph. Since $G$ is not a star graph, $G\cong K_{a,b}$ for some integers $a,b$ with $b\geqslant a\geqslant 2$. Since $G$ is $K_{2,t}$-free, $\Delta(G)=b \leqslant t-1$, which is at most $(t-1)( (t-1)\delta(G)-t+2)-t+2$ since $\delta(G)\geq 2$. 

This completes the proof in the bipartite case. Now assume that $G$ is not bipartite. Our goal is to show that $\deg(v)\leqslant (t-1)\big( (t-1)\deg(w)-t+2 \big)-t+2$ for any vertices $v,w$ of $G$. 
	
First consider vertices $v,w$ at distance $2$ in $G$. 
Let $C:=N_G(v)\cap N_G(w)$. Thus $|C|\in\{1,\dots,t-1\}$. 
Let $A:=N_G(v)\setminus C$ and $B:=N_G(w)\setminus C$. 
Since $G$ is triangle-free, $A$ and $B$ are independent sets. Let $H$ be the bipartite subgraph $G[A\cup B]$. Consider a vertex $x\in A$. 
Thus $x$ is not adjacent to $w$ (since $x\not\in C$), and
there is no neighbour of $x$ in $C$ (since $G$ is triangle-free). 
Hence $x$ has a neighbour in $B$ (since $\dist_G(x,w)=2$).
On the other hand, each vertex in $B$ has at most $t-1$ neighbours in $A$. Hence $|A|\leqslant |E(H)|\leqslant (t-1)|B|$.
 Now, 
\begin{align*}
    \deg(v)=|C|+|A|\leq
|C|+(t-1)|B|
& =|C|+(t-1)(\deg(w)-|C|)\\
& =(t-1)\deg(w)-(t-2)|C|\\
& \leq (t-1)\deg(w)-t+2.
\end{align*}

Now consider an edge $vw$ of $G$. 
Suppose there is a vertex $x$ of $G$ adjacent to neither $v$ nor $w$. Thus $\dist_G(x,v)=2$ and $\dist_G(x,w)=2$. 
As proved above,
$\deg(v) \leqslant (t-1)\deg(x)-t+2$ and $\deg(x)\leqslant (t-1)\deg(w)-t+2$. 
Thus $\deg(v)\leqslant 
(t-1)\big( (t-1)\deg(w)-t+2 \big)-t+2$, as desired. 

Now assume that each vertex $x\in V(G)\setminus \{v,w\}$ is adjacent to $v$ or $w$. Since $G$ is triangle-free, $x$ is adjacent to exactly one of $v$ and $w$. Also since $G$ is triangle-free, $N_G(v)\setminus \{w\}$ is an independent set, and $N_G(w)\setminus \{v\}$ is an independent set. Thus $G$ is bipartite with bipartition $\{N_G(v),N_G(w)\}$, which is a contradiction. 
\end{proof}


The following result describes the degree sequences that arise in \cref{K2tFree}.

\begin{prop}
\label{DegreeSequences}
For every integer $t\geq 2$, if $G$ is a triangle-free $n$-vertex graph with diameter $2$ and no $K_{2,t}$ subgraph, then
$$\frac{1}{t-1}
\sum_{v\in V(G)} \Big(\deg(v)^2 +(t-2)\deg(v) \Big)  \leq 
n(n-1) \leq \sum_{v\in V(G)} \deg(v)^2  .$$
\end{prop}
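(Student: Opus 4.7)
The approach is a double count of the quantity
\[
S := \sum_{\substack{x,y\in V(G)\\ x\neq y}} |N_G(x)\cap N_G(y)|,
\]
that is, the number of length-two walks in $G$ with (ordered) distinguished endpoints $x\neq y$.

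First I would evaluate $S$ by summing over the middle vertex: a fixed vertex $v$ is the centre of exactly $\deg(v)(\deg(v)-1)$ such walks, so
\[
S \;=\; \sum_{v\in V(G)} \deg(v)^2 \;-\; \sum_{v\in V(G)} \deg(v).
\]
Next I would bound $|N_G(x)\cap N_G(y)|$ termwise using the three hypotheses. Triangle-freeness forces $|N_G(x)\cap N_G(y)| = 0$ whenever $xy\in E(G)$; the diameter-$2$ hypothesis forces $|N_G(x)\cap N_G(y)| \geq 1$ whenever $x\neq y$ and $xy\notin E(G)$; and $K_{2,t}$-freeness forces $|N_G(x)\cap N_G(y)| \leq t-1$ for all distinct $x,y$. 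Since the number of ordered non-adjacent pairs of distinct vertices is $n(n-1)-\sum_v \deg(v)$, summing these termwise estimates gives
\[
n(n-1) - \sum_v \deg(v) \;\leq\; S \;\leq\; (t-1)\Bigl(n(n-1) - \sum_v \deg(v)\Bigr).
\]

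Finally I would substitute the degree-sequence expression for $S$ into both inequalities and rearrange. The lower bound immediately collapses to $n(n-1)\leq \sum_v\deg(v)^2$, which is the right-hand inequality of the proposition. The upper bound becomes $\sum_v \deg(v)^2 + (t-2)\sum_v \deg(v) \leq (t-1) n(n-1)$, which is the left-hand inequality after dividing by $t-1$. There is no real obstacle here: the whole argument is one identity plus two per-pair bounds; the only care required is in the bookkeeping between ordered and unordered pairs, and in noting that triangle-freeness is what lets us ignore the contribution from adjacent pairs in both directions.
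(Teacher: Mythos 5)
Your proposal is correct and is essentially the paper's own argument: both double-count paths of length two by their middle vertex (giving $\sum_v\binom{\deg v}{2}$, or twice that in your ordered version), lower-bound the count via the diameter-2 condition, and upper-bound it by noting that triangle-freeness restricts the endpoint pairs to non-adjacent ones, each of which carries at most $t-1$ paths by $K_{2,t}$-freeness. The only difference is your use of ordered rather than unordered endpoint pairs, which just introduces a factor of $2$ throughout.
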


\begin{proof}
Let $\PP$ be the set $\{ (v,\{u,w\} ): uv,vw\in E(G), u\neq w\}$. Then $|\PP|=\sum_{v\in V(G)} \binom{\deg(v)}{2}$. 
Since $G$ has diameter 2, for each non-adjacent pair $u,w\in V(G)$ there exists $v\in V(G)$ such that $(v,\{u,w\})\in\PP$. Thus,
$$\sum_{v\in V(G)} \binom{\deg(v)}{2} 
 = |\PP| \geq 
 \tbinom{n}{2}-|E(G)| 
= \tbinom{n}{2}- \tfrac12 \sum_{v\in V(G)} \deg(v).
 $$
Hence,
$$\sum_{v\in V(G)} \deg(v)^2  
\geq n(n-1).
 $$
 This proves the second inequality. For the first inequality, charge each element $(v,\{u,w\} )\in\PP$ to $\{u,w\}$. Since $G$ is triangle-free, $uw\not\in E(G)$. Since $G$ has no $K_{2,t}$ subgraph, at most $t-1$ elements of $\PP$ are charged to $\{u,w\}$. 
$$\sum_{v\in V(G)} \binom{\deg(v)}{2} 
= |\PP| 
\leq (t-1)\Big( \tbinom{n}{2} - |E(G)| \Big).
 $$
Thus,
$$\sum_{v\in V(G)} \Big(\deg(v)^2 -\deg(v) \Big) 
\leq (t-1)\Big( n(n-1) - \sum_{v\in V(G)}\deg(v)\Big).
 $$
Hence,
$$\sum_{v\in V(G)} \Big(\deg(v)^2 +(t-2)\deg(v) \Big) 
\leq (t-1) n(n-1) ,
 $$
which implies the first inequality.
\end{proof}

\begin{prop}
For every integer $t\geqslant 2$, if $G$ is an $n$-vertex triangle-free $K_{2,t}$-free graph with diameter 2, and $G$ is not a star graph, then $G$ has minimum degree at least $(1+o(1))\frac{1}{(t-1)^2}\sqrt{n}$ and at most $(1+o(1))\sqrt{tn}$, and $G$ has maximum degree at least $\sqrt{n-1}$.
\end{prop}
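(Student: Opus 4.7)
The plan is to combine the two inequalities of \cref{DegreeSequences} with the degree-bound relation of \cref{DegreeBounds}. Let $d_1,\dots,d_n$ denote the degree sequence of $G$, with $\delta:=\delta(G)$ and $\Delta:=\Delta(G)$. The basic observation is that $n\delta^2\leq\sum_v \deg(v)^2\leq n\Delta^2$, so any lower or upper bound on the sum of squared degrees immediately yields a bound on $\Delta$ or $\delta$ respectively.

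First I would handle the maximum degree lower bound. The second inequality of \cref{DegreeSequences} gives $\sum_v \deg(v)^2\geq n(n-1)$, and combining with $\sum_v \deg(v)^2\leq n\Delta^2$ yields $\Delta^2\geq n-1$, i.e.\ $\Delta\geq\sqrt{n-1}$. Next, for the minimum degree upper bound, I use the first inequality of \cref{DegreeSequences}: discarding the nonnegative linear term leaves $\sum_v \deg(v)^2\leq(t-1)n(n-1)$, and comparison with $n\delta^2\leq\sum_v \deg(v)^2$ gives $\delta\leq\sqrt{(t-1)(n-1)}$, which is at most $(1+o(1))\sqrt{tn}$.

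For the minimum degree lower bound, I combine the maximum degree lower bound just established with \cref{DegreeBounds}. That theorem gives
\[
\Delta\leq (t-1)\bigl((t-1)\delta-t+2\bigr)-t+2=(t-1)^2\delta-(t-2)t.
\]
Substituting $\Delta\geq\sqrt{n-1}$ and solving for $\delta$ yields
\[
\delta\geq\frac{\sqrt{n-1}+(t-2)t}{(t-1)^2}=(1+o(1))\,\frac{\sqrt{n}}{(t-1)^2},
\]
as required. (For this to apply I need $G$ non-bipartite or the regular bipartite case handled separately, but \cref{DegreeBounds} already covers the bipartite subcase by the assumption that $G$ is not a star.)

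There is no serious obstacle here: all three bounds are one-line consequences of \cref{DegreeBounds,DegreeSequences} once one notices the chain $n\delta^2\leq \sum_v\deg(v)^2\leq n\Delta^2$. The only mild subtlety is checking that the $o(1)$ error terms are handled cleanly, in particular that the additive constants $(t-2)t$ in the min-degree lower bound and $-t-n+1$ in the min-degree upper bound are absorbed into the $(1+o(1))$ factors as $n\to\infty$ with $t$ fixed.
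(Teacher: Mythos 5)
Your proof is correct; it coincides with the paper's argument on two of the three bounds and takes a genuinely different (and more self-contained) route on the third. For the minimum-degree upper bound, the paper cites F\"uredi's asymptotically sharp determination of $\mathrm{ex}(n,K_{2,t})=(1+o(1))\frac{\sqrt{t}}{2}n^{3/2}$ to bound the average degree by $(1+o(1))\sqrt{tn}$ and then passes to the minimum degree; you instead discard the nonnegative linear term from the first inequality of \cref{DegreeSequences} and compare with $n\delta^2\le\sum_v\deg(v)^2$, obtaining $\delta\le\sqrt{(t-1)(n-1)}$. Your route uses only results already proved in the paper and in fact yields a marginally better constant ($\sqrt{t-1}$ in place of $\sqrt{t}$); what it gives up is the average-degree statement, which the proposition does not require. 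The other two bounds are essentially the paper's: the paper gets $\Delta\ge\sqrt{n-1}$ directly from the Moore-type count $n\le 1+\Delta^2$ for diameter-$2$ graphs, while you derive the same inequality from the second part of \cref{DegreeSequences} together with $\sum_v\deg(v)^2\le n\Delta^2$ --- the same double counting in a different wrapper --- and both proofs then feed $\Delta\ge\sqrt{n-1}$ into \cref{DegreeBounds} to get $\delta\geq\bigl(\sqrt{n-1}+(t-2)t\bigr)/(t-1)^2=(1+o(1))\sqrt{n}/(t-1)^2$. Your expansion $(t-1)\bigl((t-1)\delta-t+2\bigr)-t+2=(t-1)^2\delta-(t-2)t$ is right, and your observation that \cref{DegreeBounds} already handles the bipartite non-star case means no separate case analysis is needed.
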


\begin{proof}
The Kővári--Sós--Turán Theorem~\citep{KST54} says that for any integers $t\geq s\geq 1$ there exists $c$ such that any graph on $n$ vertices with at least $cn^{2-1/s}$  edges contains $K_{s,t}$. For $s=2$ this result is tight and even the best possible constant $c$ is known. \citet{Furedi96a} 
showed that  $\mathrm{ex}(n, K_{2,t}) =(1+o(1)) \frac{\sqrt{t}}{2}  n^{3/2}$ (also  see~\citep[Theorem~3.11]{FS13}). So 
$|E(G)| \leq (1+o(1)) \frac{\sqrt{t}}{2} n^{3/2}$ edges. That is, $G$ has average degree  at most $(1+o(1))\sqrt{tn}$, which implies the same bound on the minimum degree. On the other hand, since $G$ has diameter 2, the maximum degree of $G$ is at least $\sqrt{n-1}$, which implies the minimum degree is at least $(1+o(1))\frac{1}{(t-1)^2}\sqrt{n}$ by \cref{DegreeBounds}.
\end{proof}

\section{Examples}
\label{Examples}

This section reports the findings of a computer search for graphs with the following properties:
\begin{itemize}
\item there are no triangles,
\item each pair of non-adjacent vertices have exactly 1 or $2$ common neighbours (that is, diameter $2$ and $K_{2,3}$-free), and
\item the graph is not a star $K_{1,n-1}$.
\end{itemize}
\cref{K2tFree} with $t=3$ says there are finitely many such graphs. 
\cref{DegreeSequences} says that any such $n$-vertex graph $G$ satisfies
$$
\sum_{v\in V(G)} \binom{\deg(v)+1}{2} \leq 
n(n-1) \leq \sum_{v\in V(G)} \deg(v)^2  .$$
If $G$ is $d$-regular, then $1 + \tbinom{d+1}{2} \leq n \leq 1 + d^2$, with both extremes giving strongly regular graphs. 

The following tables describe the 5,936,103 examples that we have found, the largest on 120 vertices. The fifth and sixth columns are the order and the number of orbits of the automorphism group, respectively.

\begingroup
\renewcommand{\arraystretch}{0.96}
\begin{tabular}{cccccccl}
\hline
vertices & edges & degrees & girth & group & orbits & notes \\
\hline\strut
4 & 4 & $2^{4}$ & 4 & 8 & 1 &  4-cycle \\
5 & 5 & $2^{5}$ & 5 & 10 & 1 &  5-cycle \\
6 & 7 & $2^{4}\,3^{2}$ & 4 & 4 & 3 &  subdivided $K_{2,3}$ \\
7 & 9 & $2^{4}\,3^{2}\,4$ & 4 & 8 & 3 &  \\
8 & 12 & $3^{8}$ & 4 & 16 & 1 &  \href{https://en.wikipedia.org/wiki/Mobius_ladder}{M\"obius ladder} \\
9 & 14 & $3^{8}\,4$ & 4 & 8 & 3 &  \\
10 & 15 & $3^{10}$ & 5 & 120 & 1 &  \href{https://en.wikipedia.org/wiki/Petersen_graph}{Petersen graph} \\
10 & 17 & $3^{6}\,4^{4}$ & 4 & 4 & 4 &  \\
11 & 19 & $3^{6}\,4^{5}$ & 4 & 24 & 3 &  \\
11 & 20 & $3^{5}\,4^{5}\,5$ & 4 & 10 & 3 &  \href{https://en.wikipedia.org/wiki/Groetzsch_graph}{Groetzsch graph} \\
12 & 23 & $3^{3}\,4^{8}\,5$ & 4 & 12 & 4 &  \\
12 & 24 & $4^{12}$ & 4 & 48 & 1 &  \\
13 & 24 & $3^{4}\,4^{9}$ & 4 & 48 & 3 &  \\
13 & 26 & $4^{13}$ & 4 & 52 & 1 &  \\
13 & 27 & $3\,4^{9}\,5^{3}$ & 4 & 12 & 4 &  \\
14 & 31 & $4^{8}\,5^{6}$ & 4 & 48 & 2 &  \\
15 & 35 & $4^{5}\,5^{10}$ & 4 & 120 & 2 &  \\
16 & 34 & $4^{12}\,5^{4}$ & 4 & 32 & 3 &  \\
16 & 40 & $5^{16}$ & 4 & 1920 & 1 &  \href{https://en.wikipedia.org/wiki/Clebsch_graph}{Clebsch graph} \\
17 & 40 & $4^{5}\,5^{12}$ & 4 & 48 & 3 &  \\
18 & 39 & $4^{12}\,5^{6}$ & 4 & 24 & 3 &  \\
20 & 46 & $4^{14}\,6^{6}$ & 4 & 96 & 3 &  \\
20 & 50 & $5^{20}$ & 4 & 320 & 1 &  \\
22 & 55 & $5^{22}$ & 4 & 96 & 3 &  \\
22 & 57 & $5^{18}\,6^{4}$ & 4 & 1--4 & 6--22 & (2 graphs) \\
23 & 61 & $4\,5^{14}\,6^{8}$ & 4 & 4 & 9 &  \\
24 & 64 & $5^{16}\,6^{8}$ & 4 & 2--384 & 3--12 & (9 graphs) \\
26 & 73 & $5^{10}\,6^{16}$ & 4 & 4 & 8 &  \\
26 & 75 & $5^{6}\,6^{20}$ & 4 & 20 & 3 &  \\
28 & 84 & $6^{28}$ & 4 & 2--56 & 2--21 & (6 graphs) \\
32 & 96 & $6^{32}$ & 4 & 48--1920 & 1--3 & (3 graphs) \\
35 & 112 & $6^{21}\,7^{14}$ & 4 & 42 & 2 &  \\
36 & 119 & $6^{21}\,7^{8}\,8^{7}$ & 4 & 21 & 4 &  \\
36 & 122 & $6^{12}\,7^{20}\,8^{4}$ & 4 & 1--3 & 14--36 & (2 graphs) \\
36 & 123 & $6^{9}\,7^{24}\,8^{3}$ & 4 & 1--2 & 20--36 & (3 graphs) \\
36 & 124 & $6^{6}\,7^{28}\,8^{2}$ & 4 & 1--2 & 20--36 & (4 graphs) \\
36 & 125 & $6^{3}\,7^{32}\,8$ & 4 & 1--6 & 9--36 & (3 graphs) \\
36 & 126 & $7^{36}$ & 4 & 7 & 6 &  \\
37 & 127 & $6^{21}\,8^{16}$ & 4 & 42 & 3 &  \\
37 & 130 & $6^{12}\,7^{12}\,8^{13}$ & 4 & 1--3 & 15--37 & (2 graphs) \\
\hline
\end{tabular}
\endgroup

\begingroup
\renewcommand{\arraystretch}{0.96}
\begin{tabular}{cccccccl}
\hline
vertices & edges & degrees & girth & group & orbits & notes \\
\hline\strut
37 & 131 & $6^{9}\,7^{16}\,8^{12}$ & 4 & 1--2 & 21--37 & (3 graphs) \\
37 & 132 & $6^{6}\,7^{20}\,8^{11}$ & 4 & 1--2 & 21--37 & (4 graphs) \\
37 & 133 & $6^{3}\,7^{24}\,8^{10}$ & 4 & 1--6 & 10--37 & (3 graphs) \\
37 & 134 & $7^{28}\,8^{9}$ & 4 & 7 & 7 &  \\
39 & 143 & $7^{26}\,8^{13}$ & 4 & 1--78 & 2--39 & (536 graphs) \\
40 & 140 & $7^{40}$ & 4 & 1--40 & 2--40 & (19 graphs) \\
40 & 144 & $7^{32}\,8^{8}$ & 4 & 8--32 & 4--7 & (4 graphs) \\
45 & 180 & $8^{45}$ & 4 & 30--1440 & 1--2 & (2 graphs) \\
46 & 189 & $8^{36}\,9^{10}$ & 4 & 144 & 3 &  \\
46 & 195 & $7^{3}\,8^{21}\,9^{19}\,10^{3}$ & 4 & 6 & 10 &  \\
47 & 198 & $8^{28}\,9^{18}\,10$ & 4 & 32 & 6 &  \\
48 & 192 & $8^{48}$ & 4 & 1--240 & 2--48 & (390 graphs) \\
48 & 207 & $8^{21}\,9^{24}\,10^{3}$ & 4 & 12 & 9 &  \\
49 & 216 & $8^{15}\,9^{28}\,10^{6}$ & 4 & 8--48 & 5--11 & (2 graphs) \\
49 & 219 & $8^{8}\,9^{36}\,10^{5}$ & 4 & 8 & 11 &  \\
50 & 175 & $7^{50}$ & 5 & 252000 & 1 &  \href{https://en.wikipedia.org/wiki/Hoffman-Singleton_graph}{Hoffman--Singleton graph} \\
50 & 225 & $8^{10}\,9^{30}\,10^{10}$ & 4 & 8--20 & 5--13 & (2 graphs) \\
50 & 227 & $8^{6}\,9^{34}\,10^{10}$ & 4 & 4 & 16 &  \\
51 & 234 & $8^{6}\,9^{30}\,10^{15}$ & 4 & 8--48 & 5--12 & (2 graphs) \\
51 & 235 & $8^{5}\,9^{30}\,10^{16}$ & 4 & 10 & 8 &  \\
52 & 243 & $8^{3}\,9^{28}\,10^{21}$ & 4 & 12 & 10 &  \\
52 & 244 & $9^{32}\,10^{20}$ & 4 & 128 & 3 &  \\
53 & 252 & $8\,9^{24}\,10^{28}$ & 4 & 32 & 6 &  \\
54 & 261 & $9^{18}\,10^{36}$ & 4 & 288 & 2 &  \\
55 & 270 & $9^{10}\,10^{45}$ & 4 & 1440 & 2 &  \\
56 & 252 & $9^{56}$ & 4 & 4--28 & 2--14 & (19 graphs) \\
56 & 280 & $10^{56}$ & 4 & 80640 & 1 &  \href{https://en.wikipedia.org/wiki/Gewirtz_graph}{Gewirtz graph} \\
58 & 267 & $8^{2}\,9^{42}\,10^{14}$ & 4 & 2--14 & 5--29 & (20 graphs) \\
60 & 270 & $9^{60}$ & 4 & 30--120 & 1--2 & (2 graphs) \\
60 & 284 & $9^{32}\,10^{28}$ & 4 & 4--224 & 2--15 & (28 graphs) \\
62 & 301 & $9^{18}\,10^{44}$ & 4 & 16--112 & 4--10 & (20 graphs) \\
64 & 320 & $10^{64}$ & 4 & 32--1792 & 2--9 & (9 graphs) \\
70 & 315 & $9^{70}$ & 4 & 70 & 2 &  \\
72 & 360 & $10^{72}$ & 4 & 36--144 & 1--2 & (2 graphs) \\
80 & 400 & $10^{80}$ & 4 & 70--80 & 2--4 & (2 graphs) \\
90 & 495 & $11^{90}$ & 4 & 20--40 & 3--6 & (3 graphs) \\
95 & 545 & $11^{50}\,12^{45}$ & 4 & 100 & 4 & (2 graphs) \\
100 & 600 & $12^{100}$ & 4 & 1000 & 2 &  \\
120 & 780 & $13^{120}$ & 4 & 3840 & 1 &  \\
120 & 780 & $13^{120}$ & 4 & $\ge2$ & $\ge2$ & (5,934,946 graphs) \\
\hline
\end{tabular}
\endgroup

\nicebreak

The graphs in \texttt{graph6}, \texttt{Magma} or adjacency matrix
format, except for the non-vertex-transitive graphs on 120 vertices can be downloaded from\\
\url{https://users.cecs.anu.edu.au/~bdm/data/woodgraphs.g6}\\
\url{https://users.cecs.anu.edu.au/~bdm/data/woodgraphs.magma}\\
\url{https://users.cecs.anu.edu.au/~bdm/data/woodgraphs.am}\\[1ex]
The 53733 known graphs on 120 vertices with at most 12 orbits are at\\
\url{https://users.cecs.anu.edu.au/~bdm/data/woodgraphs120o12.g6}

The collection is complete for the following cases:
\begin{itemize}
\item everything up to 19 vertices,
\item 20 vertices and minimum degree at least 5 (any others have minimum degree 4),
\item 22 vertices and regular,
\item 23 vertices and regular,
\item 24 vertices and regular of degree 6 (degree 5 is still possible),
\item vertex-transitive graphs up to 49 vertices,
\item Cayley graphs of all groups up to order 320 except 256 and one group of order 320,
\item vertex-transitive graphs up to 255 vertices whose automorphism group has a transitive subgroup of order twice the number of vertices,
\item vertex-transitive graphs up to 170 vertices whose automorphism group has a transitive subgroup of order three times the number of vertices, 
\item graphs up to order 158 with a group acting regularly in parallel on two orbits.
\item graphs of order $n$ with a group of order $n$ acting in parallel on two orbits, for $n\le 428$, 
\item graphs of order $n$ with a group of order $3n/2$ acting in parallel on two orbits, for $n\le 336$. 
\end{itemize}

\subsection{Summary of the data}

The computational results of the previous section show that there are large numbers of small triangle-free $K_{2,3}$-free graphs of diameter $2$. It is natural to examine these to determine if there are any obvious patterns that might be extended or generalised to an infinite family. 

Indeed, there are several distinct ``clusters'' of graphs that collectively contain the vast majority of the examples. In particular, there is a cluster of $536$ graphs on $39$ vertices, one of $390$ graphs on $48$ vertices and one of nearly $6$ million graphs on $120$ vertices. Although we can describe each of these clusters quite precisely, none of them seem to generalise to an infinite family.

\paragraph{\boldmath The $4$-cycle and $5$-cycle constructions:}
Many of the graphs in the list of examples can be constructed in the same general fashion. Let $C_1$, $C_2$, \ldots, $C_k$ and $D_1$, $D_2$, \ldots, $D_\ell$ be a collection of $k+\ell$ vertex-disjoint $4$-cycles, and form a graph by adding a perfect matching between each $C_i$ and $D_j$ in such a way that the subgraph induced by $V(C_i \cup D_j)$ is the cubic M\"obius ladder on $8$ vertices (also known as the Wagner graph $V_8$). Equivalently, we start with a complete bipartite graph $K_{k,\ell}$ and then expand each vertex to a $4$-cycle and each edge to a $4$-edge matching.  This construction produces all but a handful of the examples when the number of vertices is a multiple of $4$; for example, all but $11$ of the $391$ graphs on $48$ vertices arise from the $4$-cycle construction.
An analogous construction uses $5$-cycles rather than $4$-cycles in such a way that the subgraph induced by each connected pair of $5$-cycles is the Petersen graph. A Ramsey Theory argument shows that neither of these constructions can produce an infinite family.

\paragraph{\boldmath Edge rotations of $39$-vertex graphs:} 
Define an \defn{edge rotation} of a graph $G$ to be the graph $G - uv + uw$ obtained by deleting an edge $uv$ and adding a non-edge $uw$. Occasionally this operation preserves the property of being a triangle-free $K_{2,3}$-free graph of diameter two and can thus be used to construct new examples from old. This is particularly effective on $39$ vertices---every graph in the cluster of $536$ graphs can be obtained from any other by a sequence of edge rotations where every intermediate graph is a triangle-free $K_{2,3}$-free graph of diameter two.

\paragraph{\boldmath $120$-vertex double covers:} One example with 120 vertices and degree 13 was found with a vertex-transitive group of order 3840, and we noticed that it was a double-cover of a vertex-transitive graph of order~60.
Define the following operation on 120-vertex graphs: take a fixed-point-free involution and identify the two vertices in each orbit without removing loops or multiple edges. Then expand the resulting 60-vertex multigraph in all possible ways to 120 vertices using the inverse operation. Under circumstances which are easy to teach to a SAT-solver, this sometimes produces additional $120$-vertex triangle-free $K_{2,3}$-free graphs of diameter two.  Then we can repeat the process until no further such graphs are produced. After many iterations, a total of $5,934,947$ graphs of order $120$ were found. Only the initial graph is vertex-transitive. 

\paragraph{Miscellaneous constructions:} 
Any induced subgraph $H$ of a triangle-free $K_{2,3}$-free graph of diameter $2$ inherits the properties of being triangle-free and $K_{2,3}$ free. If it happens to also have diameter $2$, then $H$ is another example. 

A \defn{Ryser-switch} on a graph $G$ is a degree-preserving graph operation that removes two existing edges $uv$, $wx$ and inserts two new edges $uw$ and $vx$ (where $\{u,v,w,x\}$ are distinct, $uv$, $wx \in E(G)$ and $uw$, $vx \notin E(G)$). Occasionally a new triangle-free $K_{2,3}$-free graph of diameter $2$ can be obtained from an existing one. 

The list of examples presented in the previous section is closed under taking induced subgraphs and Ryser switches.

\section{Symmetric Constructions}

It is natural to consider symmetric graphs to be possible counterexamples to \cref{K2tFree}. We first consider strongly regular graphs, and then consider Cayley graphs. 

\subsection{Strongly regular graphs}
\label{StronglyRegularGraphs}

For integers $\lambda\geq 0$ and $\mu\geq 1$, a $k$-regular graph $G$ on $n$ vertices is \defn{$(\lambda,\mu)$-strongly regular} if every pair of adjacent vertices have $\lambda$ common neighbours, and every pair of distinct non-adjacent vertices have $\mu$ common neighbours. Our notation for such a graph is \defn{$\srg(n,k,\lambda,\mu)$}. 
A graph is \defn{strongly regular} if it is $(\lambda,\mu)$-strongly regular for some integers $\lambda\geq 0$ and $\mu\geq 1$.

The case $\lambda=0$ corresponds to triangle-free strongly regular graphs. Complete bipartite graphs $K_{n,n}$ are triangle-free strongly regular graphs, said to be \defn{trivial}. There are seven known non-trivial triangle-free strongly regular graphs
(the 5-cycle, the Petersen graph, the Clebsch graph, the Hoffman--Singleton graph, the Gewirtz graph, the Mesner-M22 graph, and the Higman--Sims graph). It is open whether there are infinitely many non-trivial triangle-free strongly regular graphs. \citet{Biggs71a} showed that for any positive integer $\mu\not\in\{2,4,6\}$ there are finitely many triangle-free $(0,\mu)$-strongly regular graphs; see 
\citep{Biggs09a,Biggs09,Biggs11,Elzinga03} for related results. 
 
Suppose that for some $\mu\in\{2,4,6\}$, there is an infinite family $\mathcal{G}$ of non-trivial triangle-free $(0,\mu)$-strongly regular graphs. Since $\mu\geq 1$, every graph in $\mathcal{G}$ has diameter $2$. By assumption, every graph in $\mathcal{G}$ is $K_{2,\mu+1}$-free. Since every graph in $\mathcal{G}$ is regular and non-trivial, no graph in $\mathcal{G}$  is a star. Thus, \cref{K2tFree} fails with $t=\mu+1$. Hence, \cref{K2tFree} with $t=7$ implies there are finitely many non-trivial triangle-free strongly regular graphs. It is interesting that \cref{K2tFree} replaces the algebraic setting for the study of strongly regular graphs by a more extremal viewpoint.




\subsection{Cayley graphs on abelian groups}
\label{CayleyGraphs}

Let $A$ be a finite abelian group written additively, and $S$ an inverse-closed subset of $A$. The \defn{Cayley graph $(A, S)$} is defined to be the graph with vertex set $A$, where two vertices $x,y \in A$ are adjacent if and only if $x-y \in S$. 
Define $T := \{ x+x : x \in S\}$.


\begin{lem}
\label{lem:cayley} Let $(A,S)$ be any Cayley graph. 
\begin{enumerate}[(a)]
\item The number of common neighbours of two non-adjacent vertices in $(A,S)$ is even, unless their difference is in $T$.
\item $(A,S)$ is $K_3$-free if and only if there is no set of elements $x,y,z\in S$ such that $x+y=z$.
\item $(A,S)$ has diameter $2$ if and only if for every $z\notin \{S\cup\{0\}\}$ there exist $x,y\in S$ such that $x+y=z$.
\item $(A,S)$ is $K_{2,3}$-free if and only if the following hold. For every $z\notin \{ T \cup \{0\}\}$ there exists at most one pair $x,y\in S$ such that $x+y=z$. For each $z \in T$ such that $z \neq 0$, if $x+y = z$ then $x = y$; and the number of $x \in S$ for which $x+x = z$ is at most two. 
\end{enumerate}
\end{lem}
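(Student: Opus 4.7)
My plan is to use the vertex-transitivity of $(A,S)$ to translate any pair of vertices so that one is $0$. Under inverse-closure of $S$, the common neighbours of $0$ and $z$ are exactly the $w \in A$ with $w \in S$ and $z-w \in S$, so they are in bijection with ordered pairs $(x,y) \in S \times S$ satisfying $x+y=z$, via $(x,y) = (w,\,z-w)$. All four parts reduce to counting such sum-representations together with the swap involution $(x,y)\mapsto(y,x)$, whose fixed points $(x,x)$ are precisely those with $2x=z$, i.e.\ with $z \in T$.

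Part (a) is then immediate: when $z \notin T$ the swap has no fixed points, ordered pairs split into orbits of size two, and the count of common neighbours is even. For (b), a triangle translates to $\{0,a,b\}$ with $a,b,a-b \in S$; setting $x=a$, $y=-b$ (so $y \in S$ by inverse-closure) and $z=a-b$ produces a triple in $S$ with $x+y=z$, and conversely any such triple returns a triangle $\{0,x,-y\}$ once we use the standard convention $0 \notin S$ to ensure distinctness. For (c), diameter $2$ says every $z \neq 0$ is either in $S$ or admits a common neighbour with $0$, which by the bijection is precisely a pair $(x,y) \in S \times S$ with $x+y=z$.

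Part (d) is where the real bookkeeping sits, and I expect the main care to be in keeping ordered and unordered counts separate. Via the bijection, $(A,S)$ is $K_{2,3}$-free iff for every $z \neq 0$ the number of ordered pairs $(x,y) \in S\times S$ with $x+y=z$ is at most $2$. Split on whether $z \in T$. When $z \notin T$, every solution has $x \neq y$, solutions occur in swap-orbits of size two, and the count is at most $2$ iff at most one unordered pair $\{x,y\} \subset S$ sums to $z$, which is the first clause of (d). When $z \in T$, the count decomposes as the number of $x \in S$ with $2x=z$ plus twice the number of unordered pairs $\{x,y\}$ with $x \neq y$ and $x+y=z$; since $z \in T$ forces the first term to be at least one, the total is at most $2$ iff the off-diagonal contribution vanishes (giving the implication $x+y=z \Rightarrow x=y$) and the diagonal contribution is at most two (the cap on the number of $x \in S$ with $x+x=z$). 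Reading the implications backwards gives the converse, and the equivalence is complete. The main obstacle is simply to verify that these two sub-conditions together are exactly what rules out three common neighbours in the presence of the diagonal; the rest is routine.
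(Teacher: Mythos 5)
Your proof is correct and follows essentially the same route as the paper: translating one vertex to $0$, identifying common neighbours of $0$ and $z$ with ordered pairs $(x,y)\in S\times S$ satisfying $x+y=z$, and using the swap involution (whose fixed points correspond to $z\in T$) to do the parity and co-degree counting. The paper phrases this as common neighbours coming in pairs $\{x+s_1,x+s_2\}$ rather than via an explicit bijection, but the substance of all four parts, including the case split on $z\in T$ in (d), is the same.
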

\begin{proof} 
(a) Let $x,y$ be non-adjacent vertices in $(A,S)$. If $x,y$ have no common neighbours, then the claim holds. Otherwise, let $z$ be a common neighbour of $x, y$. Then there exist $s_{1}, s_{2} \in S$ such that $z = x+s_{1}$ and $y = z+s_{2}$. If $s_{1} = s_{2}$ then $x-y \in T$. Otherwise, $z' = x+s_{2}$ is a second common neighbour of $x$ and $y$. Hence, the common neighbours of $x,y$ come in pairs $\{ x+s_{1}, x+s_{2}\}$ where $x-y = s_{1} + s_{2}$ is an expression for the difference of the vertices as a sum of elements from $S$. When $x-y \in T$, the number of paths of length $2$ between the vertices may be even or odd.

(b) Suppose that there exist $x,y,z \in S$ such that $x+y = z$, then the vertices $0, x, z = x+y$ are all joined by edges and form a triangle. Conversely, suppose that 
$g_1,g_2,g_3$ form a triangle in $(A, S)$. Then $h_1:=g_1- g_2\in S$, $h_2:=g_1- g_3\in S$ and $h_3:=g_2- g_3\in S$. Clearly, $h_{1} + h_{2} = h_{3}$, where all three elements belong to $S$.

(c) Suppose that $(A,S)$ has diameter $2$ and consider $z\notin S$. Let $g \in A$, then $g$ and $g+z$ are not adjacent, but there exists a common neighbour $h$. It follows that $x:=g+h\in S$ and $y:=g+z+h$ in $S$, hence there exist $x,y\in S$ such that $x+y=z$. Vice versa, assume that for all or every $z\notin \{S\cup\{0\}\}$ there exist $x,y\in S$ such that $x+y=z$, and let $g_1$ and $g_2$ be two non-adjacent vertices. Since $z=g_1+g_2\notin \{S\cup\{0\}\}$, there exists $x,y\in S$ such that $x+y=z$. It follows that $g_1+y$ is a common neighbour of $g_1$ and $g_2$: $g_1+g_1+y=y\in S$ and $g_2+g_1+y=z+y=x\in S$.

(d) The graph $(A,S)$ is $K_{2,3}$-free if any two non-adjacent vertices have at most two common neighbours. First, let $g_1,g_2$ be non-adjacent vertices so that $g_{1} + g_{2} =z\notin \{T \cup\{0\}\}$. From the proof of the first claim, every pair $x,y\in S$ with $x+y=z$ gives rise to two common neighbours of $g_1$ and $g_2$. Thus, if $(A,S)$ is $K_{2,3}$-free then each $z\notin \{S\cup T \cup \{0\}\}$ has at most one expression as a sum $x+y$ where $x, y \in S$. If $z \in T$ then there exists at least one $x \in S$ such that $x+x = z$. If $z$ could be expressed as a sum of distinct elements of $S$, then there would be at least three common neighbours of $0$ and $z$ in $(A, S)$; hence no element of $T$ can be written as a sum of distinct elements of $S$. Finally, if $z \neq 0$ then there can be at most two elements $x,y \in S$ such that $x+x = y+y = z$ otherwise, $(A, S)$ would contain a copy of $K_{2,3}$.\qedhere
\end{proof}

If $A$ is a group of odd order, then $|T| = |S|$. Since $S$ is inverse-closed, $S$ and $T$ intersect whenever $S$ contains an element of order $3$; such Cayley graphs are never triangle-free. If $A$ has odd order, then the number of paths from $0$ to an element of $T$ must be odd because $x+x = y+y$ implies that $x=y$, because multiplication by $2$ is an automorphism of the group. If $A$ is an elementary abelian $2$-group, then $T = \{ 0\}$. 

\begin{prop}\label{prop:CayleyCounting}
Suppose that $A$ is an abelian group of order $n$, and $S$ is an inverse-closed subset of $A$ of order $k$, such that $(A, S)$ is triangle-free, $K_{2,3}$-free and has diameter $2$. If $A$ has order $n$ which is coprime to $6$, then $2n-1$ is the square of an integer. If $A$ is an elementary $2$-group of order $n = 2^{d}$ then $2^{d+1} = k^{2} + k + 2$. 
\end{prop}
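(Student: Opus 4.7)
The plan is to count paths of length two in the Cayley graph $(A,S)$ in two ways, exploiting translation-invariance together with the structural information in \cref{lem:cayley}. Write $c(0,z)$ for the number of common neighbours of $0$ and $z$. Since the graph is $k$-regular and $c(u,v)=c(0,v-u)$, the standard identity $\sum_v \binom{\deg(v)}{2}=\sum_{\{u,w\}}c(u,w)$ (restricted to non-adjacent pairs by triangle-freeness) becomes
$$ n\binom{k}{2} \;=\; \frac{n}{2} \sum_{z \in A \setminus \{0\}} c(0,z), $$
so everything reduces to evaluating $c(0,z)$ in each piece of the partition $A\setminus\{0\}= S \sqcup (T\setminus\{0\}) \sqcup R$, where $R := A \setminus (S \cup T \cup \{0\})$.

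First I would check this is actually a partition. Triangle-freeness together with \cref{lem:cayley}(b) forces $S \cap T = \emptyset$: any $s,s'\in S$ with $2s=s'$ produces the triangle $\{0,s,2s\}$. For $z\in S$, triangle-freeness gives $c(0,z)=0$. For $z\in R$ the three hypotheses combine to pin down $c(0,z)$: \cref{lem:cayley}(a) forces $c(0,z)$ to be even, diameter $2$ forces $c(0,z)\geq 1$, and the $K_{2,3}$-free hypothesis forces $c(0,z)\leq 2$, so $c(0,z)=2$.

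For the coprime-to-$6$ case, $A$ has no element of order $2$ or $3$, so multiplication by $2$ is an automorphism of $A$; hence $0 \notin T$ and $|T|=|S|=k$. For $z\in T\setminus\{0\}$, \cref{lem:cayley}(d) forces any representation $z=x+y$ with $x,y\in S$ to satisfy $x=y$, and the equation $2w=z$ has a unique solution in $A$, so $c(0,z)=1$. Substituting $|S|=k$, $|T\setminus\{0\}|=k$, $|R|=n-1-2k$ into the identity gives
$$ n\binom{k}{2} \;=\; \frac{n}{2}\bigl(k + 2(n-1-2k)\bigr), $$
which rearranges to $(k+1)^2 = 2n-1$. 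For the elementary abelian $2$-group case, every nonzero element has order $2$, so $T=\{0\}$ and the middle term drops out. The identity collapses to $n\binom{k}{2}=n(n-1-k)$, which rearranges to $k^2+k+2 = 2n = 2^{d+1}$.

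The main obstacle is really bookkeeping: confirming that $S$, $T\setminus\{0\}$, and $R$ genuinely partition $A\setminus\{0\}$ (which reduces to $S\cap T = \emptyset$ via triangle-freeness), and in the odd-order case using that doubling is a bijection both to guarantee $|T|=|S|$ and to ensure each $z\in T\setminus\{0\}$ has exactly one preimage $w\in S$ under doubling. Once these are in place, the rest is straightforward algebra.
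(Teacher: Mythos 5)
Your proof is correct and is essentially the same argument as the paper's: since the number of common neighbours of $0$ and $z$ equals the multiplicity of $z$ in the multiset $S^{2}=\{x+y : x,y\in S\}$, your path-of-length-two count is literally the paper's count of $|S|^{2}$ broken over the partition $\{0\}\sqcup S\sqcup T\sqcup R$, and both yield the same identities $(k+1)^{2}=2n-1$ and $k^{2}+k+2=2^{d+1}$. Your explicit verification that $S\cap T=\emptyset$ and that doubling is a bijection in the odd-order case is a welcome bit of extra care that the paper leaves implicit.
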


\begin{proof} 
Suppose first that $A$ has order coprime to $6$. By  \cref{lem:cayley}, each $z \in T$ has a unique expression $z = x+x$ for $x \in S$ and for each element $z\notin \{S\cup T\cup \{0\}\}$ there exists a unique subset $\{x,y\} \subseteq S$ such that $z = x+y$. Since $S$ is inverse-closed, the multiset $S^{2} = \{ x+y \mid x,y \in S\}$ contains $0$ precisely $k$ times, and since $(A, S)$ is triangle-free, no element of $S$ occurs in $S^{2}$. Hence, 
\[ |S|^{2} = k + |T| + 2\left| G \setminus \{ S \cup T \cup \{0\} \} \right|\,,\]
which implies that $k^{2} = 2k + 2(n-2k-1)$. This is easily rewritten as $2n-1 = (k+1)^{2}$, yielding the conclusion. 

If $A$ is an elementary abelian $2$-group of order $2^{d}$ then each element $z \notin S \cup \{0\}$ admits a unique expression $z = x+y$ for $x,y \in S$. Hence, 
\[ |S|^{2} = k + 2\left| G \setminus\{ S \cup \{ 0\}\} \right|\,,\]
which implies that $k^{2} = k+ 2( 2^{d} - k - 1)$. This is easily rewritten as $2^{d+1} = k^{2} + k + 2$ completing the proof.
\end{proof} 

\cref{prop:CayleyCounting} proves that many elementary abelian groups of odd order do not support a Cayley graph which is triangle-free, $K_{2,3}$-free and has diameter $2$. The prime power orders less than $200$ at which such a graph is not ruled out are $5, 13, 25, 41, 61, 113, 181$. Solutions exist for the first two orders, but for none of the others by the computations described in \cref{Examples}.

\begin{cor} \label{cor:Cayley}
If $(\ZZ_2^d,S)$ is triangle-free, $K_{2,3}$-free and has diameter $2$ , then $(\ZZ_2^d,S)$ is a $\srg(2^d,|S|,0,2)$. 
\end{cor}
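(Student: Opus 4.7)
The Cayley graph $(\ZZ_2^d, S)$ is automatically $|S|$-regular, and triangle-freeness directly yields $\lambda = 0$, so the only remaining task is to prove that every pair of distinct non-adjacent vertices has exactly $\mu = 2$ common neighbours. Since every element of $\ZZ_2^d$ has order $2$, one has $T = \{x+x : x \in S\} = \{0\}$, which makes both the diameter-$2$ and the $K_{2,3}$-free conditions of \cref{lem:cayley} particularly clean in this setting.

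By vertex-transitivity of any Cayley graph, it suffices to count common neighbours of $0$ and an arbitrary $z \in \ZZ_2^d$ with $z \neq 0$ and $z \notin S$. A vertex $h$ is a common neighbour of $0$ and $z$ precisely when $h \in S$ and $h+z \in S$, so common neighbours correspond to ordered pairs $(x,y) \in S \times S$ with $x+y = z$, via $x = h$ and $y = h+z$. Because $z \neq 0$, no such pair can have $x = y$ (which would force $z = x+x = 0$), so these ordered pairs come in twos and the number of common neighbours of $0$ and $z$ equals $2N$, where $N$ is the number of unordered pairs $\{x,y\} \subseteq S$ with $x+y = z$.

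It then remains only to invoke \cref{lem:cayley}. Since $z \notin S \cup \{0\}$, part (c) (diameter $2$) gives $N \geq 1$. Since $z \notin T \cup \{0\} = \{0\}$, part (d) ($K_{2,3}$-freeness) gives $N \leq 1$. Hence $N = 1$, and so every pair of distinct non-adjacent vertices has exactly $2$ common neighbours, as required. There is no substantive obstacle here: the corollary is essentially a bookkeeping consequence of \cref{lem:cayley}, made especially clean by the collapse $T = \{0\}$ in characteristic $2$ and by the fact that the only parameter that needs checking is $\mu$.
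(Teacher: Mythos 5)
Your proof is correct and follows exactly the route the paper intends: the corollary is stated without explicit proof as an immediate consequence of \cref{lem:cayley} together with the paper's remark that $T=\{0\}$ for elementary abelian $2$-groups, and your bookkeeping (regularity, $\lambda=0$ from triangle-freeness, and $\mu=2$ from combining parts (c) and (d) of the lemma via the correspondence between common neighbours and ordered pairs summing to $z$) is precisely that argument spelled out.
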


The main result in this section is a proof that \cref{K2tFree} holds for Cayley graphs on elementary abelian $2$-groups. For $G=\ZZ_2^2$ which has only four vertices, the condition to be $K_{2,3}$-free is automatically met, and it is easy to see that the only diameter $2$ triangle-free graph in this case is $K_{2,2}$. This can also be described as the Cayley graph $(\ZZ_2^2,S)$ with $S=\{10,01\}$.

So from now on, we only consider the case $n\geq 3$. We will show that there is a unique example of a triangle-free, $K_{2,3}$-free Cayley graph on $\ZZ_2^d$. This example is the well-known \defn{Clebsch graph}. This is a $\srg(16,5,0,2)$ and the unique strongly regular graph with these parameters, which can also be described as the Cayley graph $(\ZZ_2^4,S)$ with $S=\{1000,0100,0010,0001,1111\}$.

\begin{thm}
\label{thm:Clebsch} 
Let $d\geq 3$. If $(\ZZ_2^d,S)$ is triangle-free, $K_{2,3}$-free and has diameter $2$, then $(\ZZ_2^d,S)$ is the Clebsch graph $\srg(16,5,0,2)$.
\end{thm}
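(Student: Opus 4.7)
The plan is to combine \cref{cor:Cayley} and \cref{prop:CayleyCounting} with the classical Ramanujan--Nagell theorem and the integrality of strongly regular graph eigenvalues. First I would note that by \cref{cor:Cayley}, any graph $(\ZZ_2^d, S)$ satisfying the hypotheses is an $\srg(2^d, k, 0, 2)$ with $k := |S|$, and \cref{prop:CayleyCounting} supplies the Diophantine identity $k^2 + k + 2 = 2^{d+1}$. Multiplying by $4$ and completing the square rewrites this as
\[ (2k+1)^2 + 7 = 2^{d+3}, \]
which is exactly the Ramanujan--Nagell equation.

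Next I would invoke Nagell's classical theorem: the only positive integer solutions $(x, m)$ to $x^2 + 7 = 2^m$ are $(1,3),(3,4),(5,5),(11,7),(181,15)$. Setting $x = 2k+1$ and $m = d+3$, the hypothesis $d \ge 3$ forces $m \ge 6$, leaving only two candidate parameter pairs: $(d, k) = (4, 5)$ and $(d, k) = (12, 90)$.

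To eliminate $(12, 90)$, I would use a standard eigenvalue argument. The non-trivial eigenvalues of any $\srg(N, k, 0, 2)$ are the roots of $x^2 + 2x - (k-2) = 0$, namely $-1 \pm \sqrt{k-1}$. These are either rational (hence integral), or irrational; in the latter case the equal-multiplicity ``conference graph'' condition applies, forcing $2k = (N-1)(\mu-\lambda)$ and hence $k = N-1$, which is incompatible with triangle-freeness for $N \ge 3$. So $k-1$ must be a perfect square, but $89$ is not, ruling out $\srg(4096, 90, 0, 2)$. The remaining case $(d, k) = (4, 5)$ gives parameters $\srg(16, 5, 0, 2)$, realised uniquely by the Clebsch graph, which completes the proof.

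The main obstacle is the Ramanujan--Nagell input: without that classical number-theoretic fact, the Diophantine constraint $k^2+k+2=2^{d+1}$ would be unresolved. By contrast, the eigenvalue dichotomy and the uniqueness of the Clebsch graph as an $\srg(16,5,0,2)$ are routine appeals to standard strongly regular graph theory.
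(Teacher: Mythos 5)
Your proposal is correct and follows essentially the same route as the paper: derive $k^2+k+2=2^{d+1}$ from \cref{prop:CayleyCounting}, reduce to the Ramanujan--Nagell equation $ (2k+1)^2+7=2^{d+3}$ to get $(d,k)\in\{(4,5),(12,90)\}$, and kill the $(12,90)$ case by the eigenvalue-integrality (conference-graph) condition since $89$ is not a perfect square. The only cosmetic difference is that you phrase the integrality step via the rationality dichotomy for the eigenvalues $-1\pm\sqrt{k-1}$, while the paper writes out the multiplicity formula explicitly; both are the same standard fact.
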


\begin{proof} 
By \cref{lem:cayley}, we need to find $S \subseteq \ZZ_2^d$ such that (a) $x+y=z$ has no solution with $x,y,z\in S$; (b) for all $z\notin \{S\cup\{0\}\}$, there is a unique pair $x,y\in S$ such that $x+y=z$.

By \cref{prop:CayleyCounting}, these conditions imply that $k^{2} + k + 2-2^{d+1} = 0$. This is a quadratic equation in $k$, with roots equal to $(-1\pm \sqrt{2^{d+3}-7})/2$. Since $k$ is the size of a set, $2^{d+3} - 7$ must be the square of an integer. This is the Ramanujan–Nagell equation, and \citet{Nagell48} showed that the only values of $m$ for which $2^{m}-7$ is a square are $3,4,5,7,15$.

Since $d+3\geq 6$, we find $d=4$ and $d=12$ are the only admissible values. If $d=4$, then $|S|=5$ and the parameters of the strongly regular graph $(G,S)$ are $(16,5,0,2)$. We conclude that $(G,S)$ is the Clebsch graph.

If $d=12$, then $|S|=90$. The corresponding graph would be a $\srg(2^{12},90,0,2)$. The parameters of a $\srg(v,k,\lambda,\mu)$ must satisfy the following integrality condition, expressing that the multiplicities of its eigenvalues are integers: 
\[\frac{1}{2}(v-1)\pm\frac{2k+(v-1)(\lambda-\mu)}{\sqrt{(\lambda-\mu)^2+4(k-\mu)}}\in \ZZ.\] 
Since $\lambda=0$ and $\mu=2$ this condition reduces to $\frac{|S|-2^n+1}{\sqrt{|S|-1}}\in \ZZ$. Since $\sqrt{90-1}$ is not an integer, no such graph exists.
\end{proof}

We conclude this section with a comment on the connections between difference sets and strongly regular graphs. A \defn{$(v,k,\lambda)$-difference set} in a finite group $G$ of order $v$ is a subset $D \subseteq G$ of size $k$ such that every non-identity element $g \in G$ admits a fixed number $\lambda$ of expressions $d_{i} - d_{j} = g$ (including the elements of $D$). A difference set is trivial if it is not the whole group, or the whole group excluding a single element. A difference set in an elementary abelian $2$-group not containing $0$ corresponds to a Cayley strongly regular graph with $\lambda = \mu$. A difference set may be translated by setting $D' = D+g$; by taking $g \in D$, and discarding the identity element, one obtains a reduced strongly regular graph with $\lambda = \mu - 2$. 

By a Theorem of Mann~\cite{Mann65}, a non-trivial difference set in an elementary abelian $2$-group necessarily has parameters $(2^{2n+2}, 2^{2n+1}-2^{n}, 2^{2n}-2^{n})$ for some integer $n$. Such a difference set corresponds to a triangle-free strongly regular graph if and only if $n = 1$, in which case the reduced strongly regular graph has parameters $(16, 5, 0, 2)$, yielding an alternate (and less direct) proof of \cref{thm:Clebsch}. The example on four vertices arises from a trivial $(4,3,2)$ difference set. 

\section{Possible Generalisation}
\label{Generalisation}

This section considers the natural question: can $K_{2,t}$ in \cref{K2tFree} be replaced by $K_{s,t}$? 

If $v$ is a vertex in a graph $G$, then let $G'$ be any graph obtained from $G$ by replacing $v$ by an independent set, each vertex of which has the same neighbourhood as $v$. Then $G'$ is said to be obtained from $G$ by \defn{blowing-up} $v$. This operation maintains triangle-freeness and diameter $2$. 

\begin{open}
\label{KstDetailed}
Is it true that for  all integers $s,t\geqslant 1$, there exists $n_0$ such that for every triangle-free $K_{s,t}$-free diameter-$2$ graph $G$, there exists a triangle-free $K_{s,t}$-free diameter-$2$ graph $G_0$ with at most $n_0$ vertices and there exists an independent set $I$ in $G_0$ of vertices with degree at most $s-1$, such that $G$ is obtained from $G_0$ by blowing-up each vertex in $I$?
\end{open}

Distinct vertices $v,w$ in a graph $G$ are \defn{twins} if $N_G(v)=N_G(w)$. Note that twins are not adjacent. A graph $G$ is \defn{twin-free} if no two vertices in $G$ are twins. 

\begin{open}
\label{KstTwins}
Is it true that for  all integers $s,t\geqslant 1$, there exists $n_0$ such that if $G$ is a triangle-free twin-free $K_{s,t}$-free graph with diameter $2$, then $G$ has at most $n_0$ vertices?
\end{open}

Since blowing-up a vertex produces twins, a positive answer to \cref{KstDetailed} implies a positive answer to \cref{KstTwins}. The converse also holds. 

\begin{prop}
A positive answer to \cref{KstTwins} implies a positive answer to \cref{KstDetailed}.
\end{prop}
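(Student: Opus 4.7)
The plan is to construct $G_0$ as an induced subgraph of $G$ that captures the twin structure of $G$, bounding $|V(G_0)|$ via \cref{KstTwins} applied to the twin-free quotient. By the symmetry $K_{s,t}=K_{t,s}$, we may assume $s\le t$: for $s>t$, applying the result at $(t,s)$ produces $I$ of degree at most $t-1\le s-1$, proving the case $(s,t)$. Let $n_0$ be the constant from \cref{KstTwins}, and let $G$ be a triangle-free, $K_{s,t}$-free, diameter-$2$ graph. Declare $u\sim v$ iff $N_G(u)=N_G(v)$; each twin class is an independent set (twins are non-adjacent). The twin-free quotient $\tilde G$ inherits triangle-freeness, $K_{s,t}$-freeness and diameter $2$, so has at most $n_0$ classes.

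Call a twin class $C$ \emph{large} if $|C|\ge t$ and \emph{small} otherwise. Two consequences of $K_{s,t}$-freeness, exploiting $s\le t$, are key: (i) every large class has common neighbourhood of size at most $s-1$, since otherwise $s$ common neighbours together with $t$ twins would form $K_{s,t}$; and (ii) no two large classes are mutually adjacent, for this would embed $K_{t,t}\supseteq K_{s,t}$. Let $G_0$ be the subgraph of $G$ induced on one representative from each large class together with every vertex of every small class, and let $I\subseteq V(G_0)$ be the set of chosen large-class representatives. Since each class of $\tilde G$ contributes at most $t-1$ vertices to $V(G_0)$, we have $|V(G_0)|\le (t-1)n_0$.

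It remains to verify the conditions of \cref{KstDetailed}. The graph $G_0$ is triangle-free and $K_{s,t}$-free as an induced subgraph of $G$. It has diameter $2$: for non-adjacent $u,v\in V(G_0)$, a common neighbour $w\in V(G)$ exists, and if $w\notin V(G_0)$ then $w$ lies in some large class $C$ whose representative $v_C\in V(G_0)$ is a twin of $w$ and hence is also a common neighbour. The set $I$ is independent by (ii), and each $v\in I$ satisfies $\deg_{G_0}(v)\le\deg_G(v)\le s-1$ by (i). Finally, blowing up each $v_C\in I$ into $|C|$ copies recovers $G$: by (ii), every neighbour of $v_C$ in $G$ lies in a small class and hence in $V(G_0)$, so $N_{G_0}(v_C)=N_G(v_C)$, and each new copy inherits exactly the right neighbourhood.

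The one non-trivial step is the independence of $I$; this is precisely where the reduction $s\le t$ is used, because the $K_{t,t}$ produced by two adjacent large classes is ruled out by $K_{s,t}$-freeness only when $s\le t$. The rest is routine bookkeeping on twin-class sizes.
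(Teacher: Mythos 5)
Your proof is correct and follows essentially the same route as the paper's: form the twin classes, apply \cref{KstTwins} to the twin-free quotient to bound the number of classes, keep one representative of each large class (your $I$, the paper's $L$) and all of each small class, and use $K_{s,t}$-freeness to get independence and the degree bound on $I$. You are somewhat more careful than the paper in making the WLOG $s\le t$ explicit and in verifying that the blow-up exactly recovers $G$, but the underlying argument is identical.
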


\begin{proof}
Fix $t\geqslant s\geqslant 1$. Let $n_0$ be as in \cref{KstTwins}. Let $G$ be a triangle-free $K_{s,t}$-free diameter-$2$ graph $G$. For vertices $v,w\in V(G)$, say $v\sim w$ if $N_G(v)=N_G(w)$. Then $\sim$ is an equivalence relation. Let $V_1,\dots,V_m$ be the corresponding equivalence classes. Then each $V_i$ is an independent set. Say $V_i$ is \defn{small} if $|V_i|\leqslant t-1$ and \defn{large} otherwise. Let $G_0$ be obtained from $G$ by identifying the vertices in each set $V_i$. Let $G_1$ be obtained from $G$ by identifying the vertices in each large set $V_i$. Let $L$ be the set of vertices in $G_1$ obtained by these identifications. So $L$ is an independent set in $G_1$, as otherwise $K_{t,t}$ would be a subgraph of $G$. Moreover, each vertex in $L$ has degree at most $s-1$ in $G_1$, as otherwise $K_{s,t}$ would be a subgraph of $G$. Since $G_0$ and $G_1$ are isomorphic to subgraphs of $G$, $G_0$ and $G_1$ are triangle-free and $K_{s.t}$-free. Since $G$ has diameter 2, so does $G_0$ and $G_1$. By construction, $G_0$ is twin-free. By the assumed truth of \cref{KstTwins}, $|V(G_0)|\leqslant n_0$. By construction, $|V(G_1)|\leqslant (t-1)|V(G_0)|$. Thus $|V(G_1)|\leqslant (t-1) n_0$. We have shown that $G_1$ is a triangle-free $K_{s,t}$-free diameter-$2$ graph with at most $(t-1)n_0$ vertices and there exists an independent set $L$ in $G_1$ of vertices with degree at most $s-1$, such that $G$ is obtained from $G_1$ by blowing-up each vertex in $L$. Hence $G$ satisfies \cref{KstDetailed}.
\end{proof}


In each of \cref{K2tFree,KstDetailed,KstTwins}, the triangle-free condition cannot be dropped (unless $s=1$ or  $s,t\leqslant 2$). For example, wheel graphs are twin-free $K_{2,3}$-free with diameter $2$. In general, if $G$ is any twin-free $K_{s,t}$-free graph (regardless of the diameter), then the graph $G'$ obtained from $G$ by adding one dominant vertex is twin-free $K_{s+1,t+1}$-free with diameter $2$. Of course, $G'$ has many triangles. There are examples without dominant vertices as well ($K_{2,2,n}$ is $K_{5,5}$-free, diameter $2$, and no dominant vertex). 

To conclude this section, we make some elementary observations about edge-maximal graphs. Fix an integer $k\geq 3$. Let $\mathcal{H}$ be any set of graphs such that for each $H\in\mathcal{H}$, each edge in $H$ is in a cycle of length at most $k$. Then every edge-maximal $\mathcal{H}$-free graph $G$ has diameter at most $k-1$. Otherwise, there are vertices $v,w$ in $G$ with $\dist_G(v,w)=k$, implying $G+vw$ contains a copy of a graph $H$ in $\mathcal{H}$ and $vw$ is in the copy, implying there is a $vw$-path in $G$ of length at most $k-1$, which contradicts  $\dist_G(v,w)=k$. Taking $\mathcal{H}=\{K_3\}$, this implies:
\begin{equation*}
\text{every edge-maximal triangle-free graph has diameter at most $2$.}
\end{equation*}
And taking $\mathcal{H}=\{K_3,K_{s,t}\}$, for any fixed $s,t\geq 2$, this implies:
\begin{equation*}
\text{every edge-maximal $K_3$-free $K_{s,t}$-free graph has diameter at most $3$.}
\end{equation*}
\cref{KstTwins} asks whether this diameter bound can be improved from 3 to $2$ for some infinite family  without twins. 

\section{Random Graphs}
\label{RandomGraphs}

A natural avenue for disproving \cref{KstTwins} would be the probabilistic method, which has yielded diverse and important results on Ramsey- and Tur\'an-type questions. For instance, in the classical Zarankiewicz problem, essentially the strongest  lower bound on the number of edges in an $n$-vertex $K_{s,t}$-free graph which is valid for all values of $s$ and $t$,
\begin{equation}
    \label{eq:lower-zarankiewicz}
    \mathrm{ex}(n, K_{s,t}) \geq \eps n^{2-\frac{s+t-2}{st-1}},
\end{equation}
is obtained using the so-called \defn{alteration method} (see \citep[Theorem~2.26]{FS13}). 
Our aim would be to construct a triangle-free $K_{s,t}$-free graph with diameter $2$ and no twin vertices. 
The following discussion refers to the $n$-vertex random graph $G(n,m)$ which has $m$ edges selected uniformly at random.\footnote{For the purpose of our discussion, this is essentially equivalent to selecting each edge independently with probability $p= m \binom n2^{-1}$, but we use the model with $m$ random edges since it is easier to relate it to the $H$-free process discussed later.} Let us start by noting that the \textit{threshold} at which $G(n,m)$ has diameter $2$ is asymptotically $m^*  = \sqrt{\frac 12n^3 \log n   }$ \citep{MM66}. This can be heuristically justified by considering the expected number of pairs with no common neighbours, which is asymptotic to $\frac 12 n^2e^{-4m^2n^{-3}}$.

Firstly, we note that one cannot hope to avoid $K_{2,t}$-copies or $K_{3,3}$-copies using purely probabilistic methods. To see this, we discuss the alterations method. To `construct' a $K_{s,t}$-free graph with $\eps n^{2-\frac{s+t-2}{st-1}}$ edges, start with the Erd\H os--Renyi random graph with $m = 2\eps n^{2-\frac{s+t-2}{st-1}}$ edges and remove an edge from each $K_{s,t}$-copy and each triangle, losing at most a half of the edges (see, e.g.~\cite{FS13}). This argument fails when $m> n^{2-\frac{s+t-2}{st-1}}$ since then the expected number of $K_{s,t}$ copies exceeds the number of edges. For $s=2$ or $s=t=3$ this threshold is at most  $n^{3/2} <m^*$, so the obtained graphs typically do not have diameter $2$. 
 More advanced tools, such as the $K_{s,t}$-free process discussed soon, fail for the same reason---an average edge of $G(n, m)$ is contained in many $K_{s,t}$-copies. 


Hence, we will discuss an approach to~\cref{KstTwins} for $s =3$ and $t=4$.  Note that maximal triangle-free graphs are exactly minimal graphs of diameter $2$. 
An approach that seems tailored for constructing such graphs is a \textit{constrained} random graph process, defined as follows. Let $\mathcal{C}$ be a monotone class of graphs (that is, closed under taking subgraphs). Let $e_1, \ldots, e_{\binom n2}$ be a uniformly random ordering of the edges of $K_n$, with $n$ an even integer. For $m \geq 0$, $G_{m+1}$ is obtained from $G_m$ by adding
the edge $e_{m+1}$ exactly if $G_m \cup \{e_{m+1}\}$ is in $\mathcal{C}$, starting from the empty $n$-vertex graph $G_0$.
When $\mathcal{C}$ is the class of $H$-free graphs, this process (referred to as the \defn{$H$-free process}) is fairly well understood, and the implied lower bound on the extremal function of $K_{s,s}$ when $s\geq 5$ exceeds~\eqref{eq:lower-zarankiewicz} by a polylogarithmic factor~\cite[Theorem 1.1]{BK10}. Particular attention has been given to the case when $H$ is a triangle because of its implications for the off-diagonal Ramsey number $r(3, t)$~\citep{BK21, FGM20}.

Consider the random $\mathcal{C}$-process where $\mathcal{C}$  is the class of $K_3$-free \textit{and} $K_{3,4}$-free graphs, and let $G$ be the final graph in the process (that is, the random graph which is both $K_3$- and $K_{3,4}$-saturated). If we could show that $G$ has diameter $2$ with positive probability, then this would  likely answer~\cref{KstTwins} in the negative. This is not an unreasonable event to expect for several reasons. First, both the $K_{3}$-free process and the $K_{3,4}$-free process typically result in a graph with at least $(1-o(1))m^*/2$ edges, and the latter has significantly more edges (see for example, \citep[Theorem~1.1]{BK10} and \citep[Theorem~1.1]{BK21}). An even more promising feature of the $H$-free process is that (in the words of \citet[Corollary~1.5]{BK10})  ``the random $H$-free graph $G_i$ is similar to the uniform random graph $G(n, i)$ with respect to small subgraph counts, with the notable exception that there
are no copies of graphs containing $H$ in $G_i$.'' This feature is also crucial in the analysis using the differential equations method. Thus one may hope that in the random $\mathcal{C}$-process, the co-degrees of non-adjacent vertex pairs are similar to those in the corresponding unconstrained random graph $G(n, m)$, and in particular, they are all positive at the end. A final encouraging observation is that if the distance between vertices $u$ and $v$ in a graph $G_m$ is at least 3, then the edge $uv$ can be added without creating a triangle (but it may create a $K_{3,4}$). Moreover, it suffices to find a \textit{large} subgraph $G' \subset G$ of diameter $2$. Let us state the question explicitly.

\begin{open}\label{op12}
 Let $s \geq 2$ and $t \geq 3$, and let $G$ be the (random) graph obtained from the random $\{K_3, K_{s,t}\}$-free-process. Does $G$ contain a subgraph $G'$ of diameter $2$, with $|V(G')| \geq f(|V(G)|)$, for some function $f$ with $f(n)\to\infty$ as $n\to\infty$?
\end{open}

Experimental investigation of \cref{op12} for $(s,t)=(2,3)$
and $(s,t)=(3,4)$ suggests that such a function $f$ grows very slowly if it exists.
In thousands of trials up to a few hundred vertices, no diameter 2 subgraph of order greater than 13 was encountered.

\subsection*{Acknowledgements} 

This work was initiated at the ``\href{https://www.matrix-inst.org.au/events/extremal-problems-in-graphs-designs-and-geometries/}{Extremal Problems in Graphs, Designs, and Geometries}'' 
workshop at the Mathematical Research Institute MATRIX (December  2023). Many thanks to the workshop organisers. 

{\fontsize{10pt}{11pt}\selectfont
\bibliographystyle{DavidNatbibStyle}
\bibliography{DavidBibliography}}

\def\soft#1{\leavevmode\setbox0=\hbox{h}\dimen7=\ht0\advance \dimen7 by-1ex\relax\if t#1\relax\rlap{\raise.6\dimen7 \hbox{\kern.3ex\char'47}}#1\relax\else\if T#1\relax \rlap{\raise.5\dimen7\hbox{\kern1.3ex\char'47}}#1\relax \else\if d#1\relax\rlap{\raise.5\dimen7\hbox{\kern.9ex \char'47}}#1\relax\else\if D#1\relax\rlap{\raise.5\dimen7 \hbox{\kern1.4ex\char'47}}#1\relax\else\if l#1\relax \rlap{\raise.5\dimen7\hbox{\kern.4ex\char'47}}#1\relax \else\if L#1\relax\rlap{\raise.5\dimen7\hbox{\kern.7ex \char'47}}#1\relax\else\message{accent \string\soft \space #1 not defined!}#1\relax\fi\fi\fi\fi\fi\fi}
\begin{thebibliography}{19}
\providecommand{\natexlab}[1]{#1}
\providecommand{\msn}[1]{MR:\,\href{http://www.ams.org/mathscinet-getitem?mr=MR{#1}}{#1}}
\providecommand{\ZBL}[1]{Zbl:\,\href{https://www.zentralblatt-math.org/zmath/en/search/?q=an:#1}{#1}}
\providecommand{\url}[1]{\texttt{#1}}
\providecommand{\urlprefix}{}
\expandafter\ifx\csname urlstyle\endcsname\relax
  \providecommand{\doi}[1]{doi:\discretionary{}{}{}#1}\else
  \providecommand{\doi}{doi:\discretionary{}{}{}\begingroup \urlstyle{rm}\Url}\fi

\bibitem[{Biggs(1971)}]{Biggs71a}
\textsc{Norman Biggs}.
\newblock \href{https://archive.org/details/finitegroupsofau0000bigg/page/92/mode/2up?view=theater}{Finite groups of automorphisms}, vol.~6 of \emph{London Mathematical Society Lecture Note Series}.
\newblock Cambridge University Press, 1971.

\bibitem[{Biggs(2009{\natexlab{a}})}]{Biggs09a}
\textsc{Norman Biggs}.
\newblock \href{http://arxiv.org/abs/0911.2455}{Families of parameters for {SRNT} graphs}.
\newblock 2009{\natexlab{a}}, arXiv:0911.2455.

\bibitem[{Biggs(2009{\natexlab{b}})}]{Biggs09}
\textsc{Norman Biggs}.
\newblock \href{http://arxiv.org/abs/0911.2160}{Strongly regular graphs with no triangles}.
\newblock 2009{\natexlab{b}}, arXiv:0911.2160.

\bibitem[{Biggs(2011)}]{Biggs11}
\textsc{Norman Biggs}.
\newblock \href{http://arxiv.org/abs/1106.0889}{Some properties of strongly regular graphs}.
\newblock 2011, arXiv:1106.0889.

\bibitem[{Bohman and Keevash(2010)}]{BK10}
\textsc{Tom Bohman and Peter Keevash}.
\newblock \href{https://doi.org/10.1007/s00222-010-0247-x}{The early evolution of the {$H$}-free process}.
\newblock \emph{Invent. Math.}, 181(2):291--336, 2010.

\bibitem[{Bohman and Keevash(2021)}]{BK21}
\textsc{Tom Bohman and Peter Keevash}.
\newblock \href{https://doi.org/10.1002/rsa.20973}{Dynamic concentration of the triangle-free process}.
\newblock \emph{Random Structures Algorithms}, 58(2):221--293, 2021.

\bibitem[{Chakraborty et~al.(2022)Chakraborty, Das, Mukherjee, kant Sahoo, and Sen}]{CDMSS22}
\textsc{Dibyayan Chakraborty, Sandip Das, Srijit Mukherjee, Uma kant Sahoo, and Sagnik Sen}.
\newblock \href{http://arxiv.org/abs/2212.04253}{Triangle-free projective-planar graphs with diameter two: domination and characterization}.
\newblock 2022, arXiv:2212.04253.

\bibitem[{Elzinga(2003)}]{Elzinga03}
\textsc{Randall~J. Elzinga}.
\newblock \href{https://doi.org/10.13001/1081-3810.1110}{Strongly regular graphs: values of {$\lambda$} and {$\mu$} for which there are only finitely many feasible {$(v,k,\lambda,\mu)$}}.
\newblock \emph{Electron. J. Linear Algebra}, 10:232--239, 2003.

\bibitem[{Fiz~Pontiveros et~al.(2020)Fiz~Pontiveros, Griffiths, and Morris}]{FGM20}
\textsc{Gonzalo Fiz~Pontiveros, Simon Griffiths, and Robert Morris}.
\newblock \href{https://doi.org/10.1090/memo/1274}{The triangle-free process and the {R}amsey number {$R(3,k)$}}.
\newblock \emph{Mem. Amer. Math. Soc.}, 263:\#1274, 2020.

\bibitem[{F\"{u}redi(1996)}]{Furedi96a}
\textsc{Zolt\'{a}n F\"{u}redi}.
\newblock \href{https://doi.org/10.1006/jcta.1996.0067}{New asymptotics for bipartite {T}ur\'{a}n numbers}.
\newblock \emph{J. Combin. Theory Ser. A}, 75(1):141--144, 1996.

\bibitem[{F\"{u}redi and Simonovits(2013)}]{FS13}
\textsc{Zolt\'{a}n F\"{u}redi and Mikl\'{o}s Simonovits}.
\newblock \href{https://doi.org/10.1007/978-3-642-39286-3_7}{The history of degenerate (bipartite) extremal graph problems}.
\newblock In \emph{Erd\"{o}s centennial}, vol.~25 of \emph{Bolyai Soc. Math. Stud.}, pp. 169--264. J\'{a}nos Bolyai Math. Soc., 2013.
\newblock arXiv:1306.5167.

\bibitem[{Godsil and Royle(2001)}]{GR01}
\textsc{Chris Godsil and Gordon Royle}.
\newblock \href{https://doi.org/10.1007/978-1-4613-0163-9}{Algebraic graph theory}, vol. 207 of \emph{Graduate Texts in Mathematics}.
\newblock Springer, 2001.

\bibitem[{Hoffman and Singleton(1960)}]{HS60}
\textsc{Alan~J. Hoffman and Robert~R. Singleton}.
\newblock \href{https://doi.org/10.1147/rd.45.0497}{On {M}oore graphs with diameters {$2$} and {$3$}}.
\newblock \emph{IBM J. Res. Develop.}, 4:497--504, 1960.

\bibitem[{K\"{o}vari et~al.(1954)K\"{o}vari, S\'{o}s, and Tur\'{a}n}]{KST54}
\textsc{Tam\'{a}s. K\"{o}vari, Vera~T. S\'{o}s, and P\'{a}l Tur\'{a}n}.
\newblock On a problem of {K}.~{Z}arankiewicz.
\newblock \emph{Coll. Math.}, 3:50--57, 1954.

\bibitem[{Lov{\'a}sz(1978)}]{Lovasz78}
\textsc{L{\'a}szl{\'o} Lov{\'a}sz}.
\newblock \href{https://doi.org/10.1016/0097-3165(78)90022-5}{Kneser's conjecture, chromatic number, and homotopy}.
\newblock \emph{J. Combin. Theory Ser. A}, 25(3):319--324, 1978.

\bibitem[{Mann(1965)}]{Mann65}
\textsc{Henry~B. Mann}.
\newblock \href{http://projecteuclid.org/euclid.ijm/1256067881}{Difference sets in elementary {A}belian groups}.
\newblock \emph{Illinois J. Math.}, 9:212--219, 1965.

\bibitem[{Moon and Moser(1966)}]{MM66}
\textsc{J.~W. Moon and L.~Moser}.
\newblock Almost all {$(0,\,1)$} matrices are primitive.
\newblock \emph{Studia Sci. Math. Hungar.}, 1:153--156, 1966.

\bibitem[{Nagell(1948)}]{Nagell48}
\textsc{Trygve Nagell}.
\newblock L\o sning till oppgave nr 2.
\newblock \emph{Norsk Mat. Tidsskr.}, 30:62--64, 1948.

\bibitem[{Plesn\'{\i}k(1975)}]{Plesniak75}
\textsc{J\'{a}n Plesn\'{\i}k}.
\newblock Critical graphs of given diameter.
\newblock \emph{Acta Fac. Rerum Natur. Univ. Comenian. Math.}, 30:71--93, 1975.

\end{thebibliography}
\end{document}